\theoremstyle{plain}
  \newtheorem{theorem}{Theorem}
  \newtheorem{lemma}{Lemma}
\theoremstyle{definition}
  \newtheorem{definition}{Definition}
  \newtheorem{remark}{Remark}
\renewcommand{\section}{\@startsection
{section}%
{1}%
{0em
}%
{10mm}%
{-4mm}%
{\bfseries\normalsize}}%
\def\N{\mathbb N}
\def\C{\mathbb C}
\def\R{\mathbb R}
\begin{document}

\author{L. Bernal-Gonz\'alez  and A. Bonilla }

\date{}

\title{Rate of growth of hypercyclic and frequently hypercyclic functions for the Dunkl operator}

\maketitle

\begin{abstract} 
For the Dunkl operator $\Lambda_\alpha$ $(\alpha > -1/2)$ on the space of entire functions on the complex space $\C$, the
critical rate of growth for the integral means $M_p(f,r)$ of their hypercyclic functions $f$ is obtained. The rate of growth of the
corresponding frequently hypercyclic functions is also analyzed.
\end{abstract}

\footnote{2010 {\it Mathematics Subject Classification.} Primary
47A16; Secondary 30D15, 47B37, 47B38.}
\footnote{{\it Key words
and phrases.} Frequently hypercyclic operator, frequently
hypercyclic vector, Frequent Hypercyclicity Criterion, rate of
growth, entire function, Dunkl operator.}

\section{Introduction.}\label{S-intro}

In this paper, we consider the vector space \,$H(\C )$ \,of all entire functions $\C \to \C$,
endowed with the topology of local uniform convergence.
Under this topology, $H(\C )$ becomes an F-space, that is, a completely metrizable topological vector space.
Recall that a (continuous and linear) operator $T$ on a (Hausdorff) topological vector
space $X$ is said to be {\it hypercyclic} if there exists a vector
$x\in X$, also called {\it hypercyclic}, whose orbit $\{T^nx:n\in
\mathbb{N}\}$ is dense in $X$. We refer the reader to the books \cite{BaMa09} and \cite{GrPe11} for
rather complete accounts and further information on hypercyclic operators.

\vskip .15cm

In 1952, MacLane \cite{Mac52} proved that the differentiation operator $D:H(\mathbb{C})\to H(\mathbb{C})$, given by
\[
Df(z)=f'(z),
\]
is hypercyclic. Moreover, he showed that $D$-hypercyclic
entire functions can be of exponential type 1. In 1984, Duyos-Ruiz
\cite{Duy84} they cannot be of exponential type less than 1. An
optimal result on the possible rates of growth for the
differentiation operator was subsequently obtained by Grosse-Erdmann
\cite{Gro90} and, independently, by Shkarin \cite{Shk93}. Specifically, they obtained that there is no $D$-hypercyclic entire function $f$ for which there exists a constant $C > 0$ satisfying $|f(z)| \le C \, {e^r \over \sqrt{r}}$ for $|z| = r$ large enough but, given a function $\varphi : \R_+ \to \R_+$ with $\varphi (r) \to \infty$ as $r \to \infty$, there exists a $D$-hypercyclic entire function such that $|f(z)| \le \varphi (r) \, {e^r \over \sqrt{r}}$ for $|z| = r$ large enough.

\vskip .15cm

The notion of frequent hypercyclicity that was recently introduced
by Bayart and Grivaux \cite{BaGr04}, \cite{BaGr06}. We recall that the lower density of a subset $A$ of
$\mathbb{N}$ is defined as
\[
\underline{\mbox{dens}}\,(A) = \liminf_{N\to\infty}\frac{\#\{n\in A : n\leq N\}}{N},
\]
where $\#$ denotes the cardinality of a set.

\vskip .15cm

A vector $x \in X$ is called {\it frequently hypercyclic} for
$T$ if, for every non-empty open subset $U$ of $X$,
\[
\underline{\mbox{dens}}\,\{n\in \mathbb{N} : T^n x \in U\} > 0.
\]
The operator $T$ is called {\it frequently hypercyclic} if it
possesses a frequently hypercyclic vector.

\vskip .15cm

The problem of determining possible rates of growth of
frequently hypercyclic entire functions for differentiation operator was studied in
\cite{BlaBoGro}, \cite{BoBo13} and \cite{drasinsaksman2012}. More generally, the problem of isolating the possible rates of growth of
hypercyclic or frequently hypercyclic entire functions for convolution operators on $H(\Bbb C)$
(operators that conmute with the differentiation operator) has been considered in
\cite{BeBo02},  \cite{BoGE06} and  \cite{ChSh91}. Finally, the rates of growth of hypercyclic entire functions for weighted backward shifts $B_w$ on $H(\C )$ --considered as a special sequence space-- have been established in \cite{Gro00}.
More classes of hypercyclic non-convolution operators on $H(\C )$ --different from composition operators-- have been analyzed by a number of
authors, see e.g.~\cite{aronmarkose2004,fernandezhallack2005,Kim1,Kim2,leonromero2014,petersson2005,petersson2005b,petersson2006}.

\vskip .15cm

One remarkable example is the {\it Dunkl operator}
$$
\Lambda _{\alpha} : H(\C ) \to H(\C ) \quad (\alpha > - \frac{1}{2}),
$$
which is a differential-difference operator given by
\begin{equation*}\label{Dunkl}
\Lambda _{\alpha}f(z) = \frac{d}{dz}f(z) + \frac{2\alpha +1}{z}\Big(\frac{f(z) - f(-z)}{2}\Big ).
\end{equation*}
Note that for \,$\alpha = - \frac{1}{2}$ \,we get \,$\Lambda _{\alpha} = D$.
The operator \,$\Lambda _{\alpha}$ \,was introduced in 1989 by Dunkl \cite{dunkl1989}.
It is connected to the theory of sampling signals, and there are in the literature a lot of papers dealing with the Dunkl operator,
see for instance \cite{ciaurrivarona2008} and the references given in it.

\vskip .15cm

Here we will concentrate on the dynamical aspects of \,$\Lambda_\alpha$.
The hypercyclicity of this operator was established by Betancor, Sifi and Trimeche in \cite{BeSiTri} (see also \cite{KimNa}and \cite{ChMeMiTri}).
The purpose of this note is to study the rate of growth of hypercyclic and frequently hypercyclic entire functions for the Dunkl operator.
In fact, the critical rate of growth for the $p$-integral means $M_p(f,r)$ of their hypercyclic functions $f$ is obtained.
Moreover, we get permissible and non-permissible rates of growth for the means of the corresponding frequently hypercyclic functions.

\section{Preliminaries and notation.} \label{Section-preliminaries}

As usual, throughout this paper constants $C>0$ can take different values at different occurrences.
We write $a_n \sim b_n$ for positive sequences $(a_n)$ and $(b_n$) if $a_n/b_n$ and $b_n/a_n$ are bounded.

\vskip .15cm

For an entire function $f$ and $1 \leq p < \infty$ we consider the $p$-integral means
\[
M_p(f,r)=\Big(\frac{1}{2\pi}\int_0^{2\pi} |f(re^{it})|^p
dt\Big)^{1/p} \quad (r>0)
\]
and
\[
M_\infty(f,r)=\sup_{|z|=r} |f(z)| \quad (r > 0).
\]

These means have been considered by Blasco, Bonet, Bonilla and Grosse-Erdmann (see \cite{BlaBoGro,BoBo13}) in order to establish the
possible rates of growth of hypercyclic or frequently hypercyclic entire functions for the operator \,$D$. Some of the techniques from \cite{BlaBoGro,BoBo13,Gro90} will be used and generalized in this paper in order to analyze the corresponding problem for $\Lambda_{\alpha}$.

\vskip .15cm

With this aim, some properties of the Dunkl operators are needed. Let us define
\begin{equation*}\label{dn}
d_{n}(\alpha) = {2^n \, \big([\frac{n}{2}]!\big) \over \Gamma(\alpha+1)} \, \Gamma \left([\frac{n+1}{2}] +\alpha +1 \right) \hbox{ \ for } n\ge 0
\hbox{ \,and\, } \alpha > - \frac{1}{2},
\end{equation*}
and $d_{n}(\alpha ) = 0$ when $n<0$, where $[x]$ denotes the integer part of $x$.
Notice that $d_{n}(\alpha )$ tends rapidly to $+\infty$ as $n \to \infty$.
Then, for every $k\in \Bbb N$, we have (see e.g.~\cite[pag. 106]{BeSiTri}) that
\begin{equation*}\label{zeta}
\Lambda ^{k} _{\alpha}(z^{n})= {d_n(\alpha) \over d_{n-k}(\alpha)} \, z^{n-k} \hbox{ \ for all }z\in \Bbb C .
\end{equation*}
Moreover, given $f(z)= \sum_{n=0}^{\infty} \frac{f^{(n)}(0)}{n!}z^{n} \in H(\C )$, the following holds:
$$
\Lambda_{\alpha}^{n}f(0) = \frac{f^{(n)}(0)}{n!} \, d_{n}(\alpha).
$$
By using Stirling's asymptotic formula (see e.g.~\cite{ahlfors1979}) \,$\Gamma (x) = (2\pi )^{1/2} \, x^{x - {1 \over 2}} \, e^{-x} \, \Psi (x)$ \,for $x > 0$ (where $\Psi (x) \to 1$ as $x \to +\infty$) and the facts \,$\Gamma (n+1) = n!$, $[{n \over 2}] + [{n+1 \over 2}] = n$  $(n \ge 0)$, one can easily obtain the following lemma, which will be used in the forthcoming sections.

\begin{lemma} \label{Lemma-Stirling}
For each $\alpha > -1/2$, we have the equivalence
$$
d_n(\alpha ) \sim {(n + \alpha + 1)^{n + \alpha + 1} \over e^{n + \alpha + 1}}.
$$
\end{lemma}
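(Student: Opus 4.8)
The plan is to use the parity of $n$ to rewrite $d_n(\alpha)$ as a product of two Gamma values, apply the stated form of Stirling's formula to each factor, and then compare with the right-hand side by elementary asymptotics. I note at the outset that the relation ``$\sim$'' used here only requires the two ratios to be bounded, so any positive constant depending on $\alpha$ alone is harmless; it therefore suffices to prove each equivalence up to such a constant.

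First I would split the definition of $d_n(\alpha)$ according as $n=2m$ or $n=2m+1$. Since $[n/2]=[(n+1)/2]=m$ in the first case and $[n/2]=m$, $[(n+1)/2]=m+1$ in the second, this gives
\[
d_{2m}(\alpha)=\frac{2^{2m}\,(m!)\,\Gamma(m+\alpha+1)}{\Gamma(\alpha+1)},\qquad
d_{2m+1}(\alpha)=\frac{2^{2m+1}\,(m!)\,\Gamma(m+\alpha+2)}{\Gamma(\alpha+1)}.
\]
Writing $m!=\Gamma(m+1)$, in each case the two Gamma arguments are of the form $m+O(1)$, and the ``half-integer'' shift-exponents produced by Stirling add up exactly right: $(m+1/2)+(m+\alpha+1/2)=n+\alpha+1$ when $n=2m$, and $(m+1/2)+(m+\alpha+3/2)=n+\alpha+1$ when $n=2m+1$.

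Next I would substitute $\Gamma(x)=(2\pi)^{1/2}x^{x-1/2}e^{-x}\Psi(x)$ (with $\Psi(x)\to1$) into each factor and simplify using the elementary limits $(1+c/m)^{m+O(1)}\to e^{c}$: the two factors $(2\pi)^{1/2}$ merge into a constant, the shifts by $\alpha$ are absorbed into constants, and by the exponent count above the powers of $m$ collect into $m^{\,n+\alpha+1}$, so that $m!\,\Gamma(m+\alpha+1)\sim m^{\,n+\alpha+1}e^{-2m}$ and likewise $m!\,\Gamma(m+\alpha+2)\sim m^{\,n+\alpha+1}e^{-2m}$. Multiplying by $2^{n}$, using $2^{n}m^{\,n+\alpha+1}\sim(2m)^{\,n+\alpha+1}$ and $2m\in\{n,n-1\}$ (whence $(2m)^{\,n+\alpha+1}e^{-2m}\sim n^{\,n+\alpha+1}e^{-n}$, since $((n-1)/n)^{\,n+\alpha+1}\to e^{-1}$), one obtains $d_n(\alpha)\sim n^{\,n+\alpha+1}e^{-n}$ in both parities. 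Finally, since $(1+(\alpha+1)/n)^{\,n+\alpha+1}\to e^{\alpha+1}$,
\[
\frac{(n+\alpha+1)^{\,n+\alpha+1}}{e^{\,n+\alpha+1}}\Big/\big(n^{\,n+\alpha+1}e^{-n}\big)=\Big(1+\frac{\alpha+1}{n}\Big)^{\,n+\alpha+1}e^{-(\alpha+1)}\longrightarrow 1,
\]
and combining this with the previous step gives the claim. The only real work is the parity bookkeeping in the middle step -- keeping the half-integer Stirling exponents and the $\alpha$-shifts aligned so that everything collapses to $n^{\,n+\alpha+1}e^{-n}$ -- and that, rather than anything conceptual, is the (mild) main obstacle; it is harmless precisely because ``$\sim$'' tolerates constant factors.
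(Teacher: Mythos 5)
Your proof is correct and follows exactly the route the paper indicates (the paper only sketches it): split by parity, which is the same as using $[\frac{n}{2}]+[\frac{n+1}{2}]=n$, apply the stated Stirling formula to $[\frac{n}{2}]!$ and $\Gamma([\frac{n+1}{2}]+\alpha+1)$, and absorb all $\alpha$-dependent constants, which is legitimate since $\sim$ only requires bounded ratios. The exponent bookkeeping and the final comparison $(1+\frac{\alpha+1}{n})^{n+\alpha+1}e^{-(\alpha+1)}\to 1$ are exactly right, so nothing is missing.
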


An important tool in the setting of linear dynamics is the Universality Criterion. There are several versions of it, see for instance \cite{Gro99} or \cite{GrPe11}. The one given in Theorem \ref{Univ-Crit} is sufficient for our goals. Prior to this, it is easy to extend the notion of hypercyclicity.
A sequence of continuous linear mappings \,$T_n: X \to Y$ $(n \ge 1)$ \,between two topological vector spaces $X,\, Y$ is said to be {\it universal} whenever there is a vector $x_0 \in X$ --called universal for $(T_n)$-- such that the set $\{T_n x_0: \, n \in \N\}$ is dense in $Y$. Of course, an operator $T:X \to X$ is hypercyclic if and only if the sequence $(T^n)$ of its iterates is universal.

\begin{theorem} \label{Univ-Crit}
Assume that \,$X$ and \,$Y$ are topological vector spaces, such that \,$X$ is a Baire space and \,$Y$ is separable and metrizable.
Let \,$T_n : X \to Y$ $(n \ge 1)$ \,be a sequence of continuous linear mappings.
Suppose also that there subsets \,$X_0 \subset X$ \,and \,$Y_0 \subset Y$ \,that are respectively dense in \,$X$ \,and \,$Y$ \,satisfying that, for any pair \,$(x_0,y_0) \in X_0 \times Y_0$, there are sequences \,$\{n_1 < n_2 < \cdots < n_k < \cdots \} \subset \N$ \,and \,$(x_k) \subset X$ \,such that
$$
x_k \to 0, \,\, T_{n_k}x_0 \to 0 \hbox{ \ and \ } T_{n_k}x_k \to y_0 \hbox{ \ \,as\, \ } n \to \infty .
$$
Then \,$(T_n)$ \,is universal. In fact, the set of universal vectors for \,$(T_n)$ \,is residual {\rm (}hence dense{\rm )} in \,$X$.
\end{theorem}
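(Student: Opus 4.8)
The plan is a standard Baire category argument. First I would use that $Y$ is separable and metrizable to fix a countable base $(V_j)_{j\ge 1}$ of nonempty open subsets of $Y$, and observe that the set of universal vectors for $(T_n)$ is exactly
\[
\mathcal{U} = \bigcap_{j\ge 1}\ \bigcup_{n\ge 1} T_n^{-1}(V_j),
\]
since a vector $x_0$ is universal precisely when its orbit $\{T_nx_0:n\in\N\}$ meets every $V_j$ (every nonempty open set contains some $V_j$). Each $T_n^{-1}(V_j)$ is open by continuity of $T_n$, hence each $\bigcup_{n\ge 1}T_n^{-1}(V_j)$ is open; as $X$ is a Baire space, it therefore suffices to prove that for every nonempty open $V\subset Y$ the set $A_V := \bigcup_{n\ge 1}T_n^{-1}(V)$ is dense in $X$. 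Once this is done, $\mathcal{U}$ is a dense $G_\delta$, hence residual and in particular nonempty, which simultaneously yields that $(T_n)$ is universal and that its universal vectors form a residual set.

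The core step is the density of $A_V$. Here I would fix $x\in X$ and an arbitrary neighborhood $W$ of $0$ in $X$, and choose (by continuity of addition) a neighborhood $W_1$ of $0$ with $W_1+W_1\subset W$. Using the density of $X_0$, pick $x_0\in X_0$ with $x_0-x\in W_1$; using the density of $Y_0$ together with the openness of $V$, pick $y_0\in Y_0\cap V$. Applying the hypothesis to the pair $(x_0,y_0)\in X_0\times Y_0$ produces indices $n_1<n_2<\cdots$ and vectors $x_k\to 0$ in $X$ with $T_{n_k}x_0\to 0$ and $T_{n_k}x_k\to y_0$ as $k\to\infty$.

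Now set $z_k := x_0+x_k$. Since $x_k\to 0$, we have $x_k\in W_1$ for all large $k$, so $z_k-x=(x_0-x)+x_k\in W_1+W_1\subset W$, i.e.\ $z_k\in x+W$. By linearity (additivity) of $T_{n_k}$, $T_{n_k}z_k=T_{n_k}x_0+T_{n_k}x_k\to y_0\in V$, and since $V$ is open this gives $T_{n_k}z_k\in V$ for all large $k$. Hence $z_k\in(x+W)\cap T_{n_k}^{-1}(V)\subset(x+W)\cap A_V$ for all large $k$, which shows $A_V$ meets the arbitrary basic neighborhood $x+W$ and is therefore dense in $X$. Combining this with the Baire category observation above completes the proof.

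There is no deep obstacle in this argument; the only point requiring care is the topological bookkeeping in a general (not assumed locally convex or metrizable) topological vector space $X$. One must work with an arbitrary neighborhood base at $0$ rather than with norms or metrics, split $T_{n_k}z_k$ using additivity of the maps, and remember that it is the openness of $V$ that converts the convergence $T_{n_k}z_k\to y_0$ into eventual membership $T_{n_k}z_k\in V$ — these are exactly the places where the hypotheses "$Y$ metrizable and separable" and "$X$ Baire" are consumed.
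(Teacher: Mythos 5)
Your argument is correct: the Baire category scheme (countable base of $Y$, the identity $\mathcal{U}=\bigcap_j\bigcup_n T_n^{-1}(V_j)$, and the perturbation $z_k=x_0+x_k$ to prove each $\bigcup_n T_n^{-1}(V)$ dense) is exactly the standard proof of this criterion. The paper itself offers no proof, referring instead to \cite{Gro99} and \cite{GrPe11}, where the argument given there coincides with yours, so there is nothing to add.
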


Corresponding notion and criterium for frequent universal sequences will be given in Section \ref{Section-freqhc-Dunkl}.
In that section the well-known Hausdorff--Young inequality (see for instance \cite{Ka76} or \cite{rudinARC}) will be used several times.
We state a version of it in the next theorem for the sake of completeness. Recall that if \,$p \in [1,\infty ]$ \,then the
conjugate exponent of \,$p$ \,is the unique \,$q \in [1,\infty ]$ \,satisfying \,${1 \over p} + {1 \over q} = 1$.

\begin{theorem} \label{Thm-HausdorffYoung}
Consider the spaces \,$L^p([0,2\pi ])$ \,of Lebesgue integrable functions
\,$[0,2\pi ] \to \C$ \,of order \,$p$. For each \,$F \in L^p ([0,2\pi ])$, let \,$\hat{F} (n) = {1 \over 2\pi} \int_0^{2 \pi} F(t) \, e^{-int} \,dt$
$(n \ge 0)$ \,be the sequence of its Fourier coefficients. If \,$1 < p \le 2$ \,and \,$q$ \,is the conjugate exponent of \,$p$ \,then
$$
\Big(\sum_{n=0}^\infty |\hat{F} (n)|^q \Big)^{1/q} \le \Big({1 \over 2\pi} \int_0^{2\pi} |F(t)|^p \,dt \Big)^{1/p}.
$$
\end{theorem}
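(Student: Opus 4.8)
The plan is to deduce Theorem \ref{Thm-HausdorffYoung} from its two ``endpoint'' cases $p=1$ and $p=2$ by means of the Riesz--Thorin interpolation theorem; alternatively one may simply quote \cite{Ka76} or \cite{rudinARC}. It is convenient to equip $[0,2\pi]$ with the normalized measure $\frac{dt}{2\pi}$ and to let $\mathcal F$ denote the linear map sending $F$ to the full two-sided sequence $(\hat F(n))_{n\in\Z}$ of its Fourier coefficients; once the inequality is established for $\sum_{n\in\Z}$, the statement as written follows at once, since $\sum_{n=0}^\infty |\hat F(n)|^q \le \sum_{n\in\Z}|\hat F(n)|^q$.

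First I would record the endpoint $p=1$, $q=\infty$: for every $F\in L^1([0,2\pi])$ and every $n$,
\[
|\hat F(n)| \le \frac{1}{2\pi}\int_0^{2\pi}|F(t)|\,dt,
\]
so $\mathcal F : L^1([0,2\pi]) \to \ell^\infty(\Z)$ has norm $\le 1$. Next the endpoint $p=q=2$: since $\{e^{int}:n\in\Z\}$ is an orthonormal system in $L^2([0,2\pi],\frac{dt}{2\pi})$, Bessel's inequality (in fact Parseval's identity) yields
\[
\sum_{n\in\Z}|\hat F(n)|^2 = \frac{1}{2\pi}\int_0^{2\pi}|F(t)|^2\,dt,
\]
so $\mathcal F : L^2([0,2\pi]) \to \ell^2(\Z)$ also has norm $\le 1$.

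It then remains to interpolate. Given $p\in(1,2)$, choose $\theta\in(0,1)$ with $\frac1p = (1-\theta)+\frac\theta2 = 1-\frac\theta2$, that is, $\theta = 2(1-\frac1p) = \frac2q$. For the corresponding target exponent $r$ one gets $\frac1r = (1-\theta)\cdot 0 + \frac\theta2 = \frac\theta2 = \frac1q$, so $r=q$ is exactly the conjugate exponent of $p$. The Riesz--Thorin theorem applied to $\mathcal F$ with these data shows that $\mathcal F : L^p([0,2\pi]) \to \ell^q(\Z)$ has norm at most $1^{1-\theta}\cdot 1^\theta = 1$, which is the asserted inequality (it suffices to argue first for trigonometric polynomials, where all sums are finite, and then invoke density in $L^p$). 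There is no genuine difficulty here, the result being classical and quoted only for completeness; the single point demanding care is to keep the normalizing factor $\frac{1}{2\pi}$ consistent at both endpoints, which is what makes the constant in the final inequality equal to $1$ rather than something larger.
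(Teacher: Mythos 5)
Your proof is correct: the two endpoint bounds ($L^1\to\ell^\infty$ with constant $1$ under the normalized measure, and Parseval for $L^2\to\ell^2$) together with Riesz--Thorin at $\theta=2/q$ give exactly the stated inequality with constant $1$, and dropping the negative-index terms is harmless. Note, however, that the paper does not prove this statement at all: Theorem \ref{Thm-HausdorffYoung} is the classical Hausdorff--Young inequality, recorded ``for the sake of completeness'' with a citation to \cite{Ka76} and \cite{rudinARC}, and the interpolation argument you give is precisely the standard proof found in those references, so you have simply supplied the omitted classical argument rather than deviated from the paper.
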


\section{Hypercyclic entire functions for the Dunkl operator.} \label{S-Rate}

For hypercyclicity with respect to the Dunkl operator, the rate of growth $e^r/r^{\alpha + 1 }$ turns out to be critical.
We denote \,$\R^+ := (0,+\infty )$ \, and \,$\N_0 := \N \cup \{0\}$.

\begin{theorem}\label{T-ratehc}
Let $1 \leq p \leq \infty$. Then the following holds:
\begin{itemize}
\item [\rm (a)] For any function $\varphi:\mathbb{R}_+\to\mathbb{R}_+$
with $\varphi(r)\to\infty$ as $r\to\infty$ there is a
$\Lambda _{\alpha}$-hypercyclic entire function $f$ with
\[
M_p(f,r)\leq \varphi(r)\frac{e^r}{r^{\alpha + 1}}\quad\mbox{for $r>0$
sufficiently large}.
\]
\item [\rm (b)] There is no $\Lambda _{\alpha}$-hypercyclic entire function $f$ satisfying, for some $C > 0$, that
\[
M_p(f,r)\leq C \frac{e^r}{r^{\alpha + 1}}\quad\mbox{for all $r>0$.}
\]
\end{itemize}
\end{theorem}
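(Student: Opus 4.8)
The plan is to follow the classical Grosse-Erdmann/Shkarin scheme, adapted to the Dunkl setting via the monomial formula $\Lambda_\alpha^k(z^n) = \frac{d_n(\alpha)}{d_{n-k}(\alpha)}z^{n-k}$ and the Stirling asymptotics of Lemma~\ref{Lemma-Stirling}. For part~(a), I would invoke the Universality Criterion (Theorem~\ref{Univ-Crit}) with $X = Y = H(\C)$, $T_n = \Lambda_\alpha^n$, and take $X_0 = Y_0$ to be the set of polynomials, which is dense. Given polynomials $x_0, y_0$, one must produce $n_k \to \infty$ and $x_k \to 0$ in $H(\C)$ with $\Lambda_\alpha^{n_k}x_0 \to 0$ and $\Lambda_\alpha^{n_k}x_k \to y_0$. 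The natural choice is $x_k = $ a ``right inverse'' of $\Lambda_\alpha^{n_k}$ applied to $y_0$, i.e.\ if $y_0(z) = \sum_j c_j z^j$ then set $x_k(z) = \sum_j c_j \frac{d_j(\alpha)}{d_{j+n_k}(\alpha)} z^{j+n_k}$; since $d_{j+n_k}(\alpha)$ grows super-exponentially while we only need $x_k \to 0$ uniformly on compacta, the bound $\sup_{|z|=r}|x_k(z)| \le C(r)\,\frac{d_j(\alpha)}{d_{j+n_k}(\alpha)} r^{n_k}$ goes to $0$ as $n_k \to \infty$ by Stirling. For $\Lambda_\alpha^{n_k}x_0$: if $x_0$ has degree $m$ then $\Lambda_\alpha^{n_k}x_0 = 0$ for $n_k > m$, so this term is trivially fine. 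Thus the universal (hence hypercyclic) functions are residual in $H(\C)$. The subtlety is the growth control: one constructs the hypercyclic $f$ directly as a series $\sum_k x_k$ along a sufficiently lacunary sequence $n_k$, chosen fast enough (using $\varphi(r) \to \infty$) that the partial sums obey $M_p(f,r) \le \varphi(r)\,e^r/r^{\alpha+1}$; here the point is that a single block $z^{n}/\text{(normalizing constant)}$ has integral mean of order $e^r/r^{\alpha+1}$ exactly when $n \approx r$, by Stirling applied to $d_n(\alpha) \sim (n+\alpha+1)^{n+\alpha+1}e^{-(n+\alpha+1)}$ and the elementary estimate $\max_n r^n/d_n(\alpha) \sim e^r/r^{\alpha+1}$ (obtained by optimizing $n \mapsto n\log r - (n+\alpha+1)\log(n+\alpha+1) + (n+\alpha+1)$, whose maximum sits near $n = r - \alpha - 1$).

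For part~(b), the strategy is the standard lower-bound argument. Suppose $f$ were $\Lambda_\alpha$-hypercyclic with $M_p(f,r) \le C\,e^r/r^{\alpha+1}$ for all $r > 0$. Write $f(z) = \sum_n a_n z^n$ with $a_n = f^{(n)}(0)/n!$. Cauchy's estimates (or, for $1 \le p < \infty$, the trivial bound $|a_n| r^n \le M_1(f,r) \le M_p(f,r)$ via Hölder, combined with the relation between Taylor and Fourier coefficients) give $|a_n| \le C\,\frac{e^r}{r^{n+\alpha+1}}$ for every $r > 0$; optimizing over $r$ (minimum near $r = n+\alpha+1$) yields $|a_n| \le C\,\frac{e^{n+\alpha+1}}{(n+\alpha+1)^{n+\alpha+1}} \sim \frac{C}{d_n(\alpha)}$ by Lemma~\ref{Lemma-Stirling}. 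Consequently $\Lambda_\alpha^n f(0) = a_n d_n(\alpha)$ is bounded in $n$. But if $f$ is hypercyclic, the orbit $\{\Lambda_\alpha^n f\}$ is dense in $H(\C)$, so in particular the sequence of evaluations $\Lambda_\alpha^n f(0)$ must be dense in $\C$ (evaluation at $0$ is continuous and surjective-image-dense on a dense set), contradicting its boundedness. This gives the non-existence in~(b).

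The main obstacle is part~(a), specifically the quantitative construction: one cannot merely cite the Universality Criterion, because that gives only \emph{existence} of a hypercyclic function with no growth control. Instead I would build $f = \sum_k p_k(z)\cdot z^{n_k}/(\text{const})$ by hand, where $(p_k)$ enumerates (with repetition) a dense sequence of polynomials, choose the gaps $n_{k+1} - n_k$ inductively so large that (i) the ``tail'' blocks with index $> n_k$ are uniformly tiny on the disc of radius governing the approximation of $p_k$, ensuring $\Lambda_\alpha^{n_k}f \approx p_k$ locally uniformly, and (ii) on each annulus $r \in [n_k, n_{k+1}]$ the dominant block contributes at most $\varphi(r)\,e^r/r^{\alpha+1}$, which is feasible because $\varphi(r) \to \infty$ lets us absorb the finitely many competing lower blocks and the bounded combinatorial factors $M_p(p_k z^{n_k},\cdot)$. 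Making these two requirements simultaneously compatible — tightening the gaps for approximation while keeping the growth envelope — is the delicate bookkeeping, and it is exactly where the sharpness of the exponent $\alpha+1$ (rather than something larger) is used, via the precise Stirling equivalence for $d_n(\alpha)$.
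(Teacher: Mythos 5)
For part (b) your argument is essentially the paper's: the Cauchy/$M_1$ estimate $|f^{(n)}(0)/n!|\,r^n\le M_1(f,r)\le M_p(f,r)$, optimization at $r=n+\alpha+1$, and Lemma \ref{Lemma-Stirling} give boundedness of $\Lambda_\alpha^n f(0)=\frac{f^{(n)}(0)}{n!}d_n(\alpha)$, which contradicts hypercyclicity via the continuous functional $\delta_0$. That part is fine.

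For part (a) there is a genuine gap, and it stems from a misconception you state explicitly: that ``one cannot merely cite the Universality Criterion, because that gives only existence \ldots with no growth control.'' That is true only if you take $X=Y=H(\C)$, as you do. The paper's key idea is precisely to cite Theorem \ref{Univ-Crit} with a \emph{smaller domain}: $X$ is the closure of the polynomials in the weighted Banach space $\widetilde X=\{f:\ \sup_{r>0}M_\infty(f,r)\,r^{\alpha+1}/(\varphi(r)e^r)<\infty\}$, which embeds continuously in $H(\C)$, and $Y=H(\C)$, $T_n=\Lambda_\alpha^n|_X$, $S(z^k)=\frac{d_k(\alpha)}{d_{k+1}(\alpha)}z^{k+1}$. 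Then every universal vector automatically satisfies the growth bound (membership in $X$ \emph{is} the bound), and the whole proof collapses to verifying that $z^n/d_n(\alpha)\to 0$ in the norm of $X$, which is exactly your Stirling computation $\max_{r>0}r^{n+\alpha+1}e^{-r}=(n+\alpha+1)^{n+\alpha+1}e^{-(n+\alpha+1)}\sim d_n(\alpha)$. Your substitute --- a hand-built lacunary series $f=\sum_k S^{n_k}p_k$ with inductively chosen gaps --- is a viable classical route (it is how the $D$ case was originally handled), but as written it is a plan, not a proof: the entire difficulty of the theorem sits in the ``delicate bookkeeping'' you defer, namely the verification that for \emph{every} large $r$ the whole series, not just the nearby block, stays below $\varphi(r)e^r/r^{\alpha+1}$. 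This is genuinely sharp: without lacunarity, coefficients of size $1/d_n(\alpha)$ give $\sum_n r^n/d_n(\alpha)$ of order $e^r/r^{\alpha+\frac12}$ (compare Lemma \ref{L-Bar} with $q=1$), overshooting the target by a factor $\sqrt r$, so the gap conditions and the absorption of the block-dependent constants into $\varphi$ must be quantified, and none of that is carried out. Two minor points: the blocks must be $\sum_j c_j\frac{d_j(\alpha)}{d_{j+n_k}(\alpha)}z^{j+n_k}$ (each coefficient rescaled separately), not $p_k(z)z^{n_k}$ over a single constant, as you yourself wrote earlier; and the case of general $p$ in (a) should simply be reduced to $p=\infty$ via $M_p\le M_\infty$, as the paper does.
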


\begin{proof} Thanks to the elementary inequality
\[
M_p(f,r)\leq M_q (f,r) \hbox{ \,for all } r > 0 \,\,\, (1 \le p \le q)
\]
it is enough to prove part (a) for the case $p = \infty$ and part (b) for the case $p=1$.

\vskip .15cm

Let us prove (a). We can assume without loss of generality that $\varphi : \Bbb R^+ \rightarrow \Bbb R^+$ with $\varphi(r) \rightarrow  \infty$ as $r\rightarrow \infty$ is monotonic and continuous on $[0,+\infty )$ with $\varphi(0) > 0$.
Consider the following space
\[
\widetilde {X} = \left\{ f\in H(\Bbb C): \, \sup_{r > 0} \frac{M_\infty (f,r) \, r^{\alpha + 1}}{\varphi(r) \, e^r} < \infty \right\}.
\]
It is not difficult to see that \,$\widetilde X$ \,is a Banach space under the norm \,$\|f\| := \sup_{r > 0} \frac{M_\infty (f,r) \, r^{\alpha + 1}}{\varphi(r) \, e^r}$ \,that is continuously embedded in $H(\mathbb{C})$.

\vskip .15cm

In addition, we define the set $X$ as the closure of the polynomials in $\widetilde X$, and the set $X_0 = Y_{0}\subset H(\mathbb{C})$ as the collection of polynomials in $z$. Note that, trivially, $X_0$ \,is dense in \,$X$ \,and \,$Y_0$ is dense in \,$Y := H(\mathbb{C})$. Now, for $n \in \N := \{1,2,3, \dots \}$, we consider the mappings
\[
T_n : X \to Y, \, T_n = \Lambda _{\alpha}^n|_X,
\]
which are continuous, and
\[
S_n:Y_0\to X, \,\, S_n=S^n \; \; \mbox{ with \,}
S(z^{k}) = \frac{d_{k}(\alpha)}{d_{k+1}(\alpha)}z^{k+1},
\]
extended linearly to \,$Y_0$.

\vskip .15cm

Now, fix a pair of polynomials $P,Q$, that is, $(P,Q) \in X_0 \times Y_0$.
Then \,$P(z) = \sum_{k=0}^p a_k z^k$ \,and \,$Q(z) = \sum_{k=0}^q b_k z^k$
\,for certain $a_0, \dots ,a_p,b_0, \dots ,b_q \in \C$.
Take as $(n_k)$ the full sequence $\{1,2,3, \dots\}$ of natural numbers
and \,$f_n (z) := S^n Q$ (with $S^n = S \circ S \circ \cdots \circ S$, $n$ times)
for all $n \in \N$. It follows from the properties of \,$\Lambda _{\alpha}$ \,given in Section \ref{Section-preliminaries}
that $T_n f_n = Q$ (so, trivially, $T_n f_n \to Q$ as $n \to \infty$) and $T_n P \to 0$ in $H(\C )$ as $n \to \infty$
(because $\Lambda^n_\alpha (z^k) = 0$ if $n \ge k$).

\vskip .15cm

Thus the conditions of the Universality Criterion (Theorem \ref{Univ-Crit}) are
satisfied if we can show that \,$\{f_n\}_{n \ge 1}$ \,converges
to \,$0$ \,in \,$X$. By linearity, one can assume that
$Q(z) = z^k$, a monomial ($k \in \mathbb{N}_0$), in which case
\[
\lim_{n \rightarrow \infty} S_n Q(z) = \lim_{n \rightarrow \infty}
\frac{d_{k}(\alpha)}{d_{k+n}(\alpha)} z^{k+n}.
\]
Therefore, all we need to show is that
\[
\lim_{n\rightarrow \infty}\frac{z^{n}}{d_{n}(\alpha)}
\]
converges to \,$0$ \,in $X$.

\vskip .15cm

To this end, fix \,$\varepsilon >0$ \,as well as an $n \in \N$.
We choose $R>0$ such that $\varphi(r)\geq 1/\varepsilon$ for
$r\geq R$. Then we have that
$$
\sup_{r\leq R} \frac{r^{\alpha + 1}}{\varphi(r) e^r} \frac{r^{n}}{d_{n}(\alpha)} \leq \frac{{R^{\alpha +1}}}{\inf_{r>0} \varphi(r)}
\frac{R^n}{d_{n}(\alpha)} \to 0 \quad \hbox{as \, } n \to \infty .
$$
Moreover a simple calculation involving the derivative of \,$r^{n + \alpha + 1} e^{-r}$ \,shows that
\[
\sup_{r\geq
R}\frac{1}{\varphi(r)} \frac{r^{\alpha + 1}}{e^r}
\frac{r^{n}}{d_n (\alpha)}\leq \varepsilon  \frac{(n+\alpha + 1)^{n + \alpha + 1}}{e^{n+\alpha +1} \, d_n(\alpha)},
\]
and Lemma \ref{Lemma-Stirling} implies that the last term is bounded by \,$C \varepsilon$ \,for any \,$n \in \mathbb{N}$.
This shows that \,$\lim_{n\rightarrow \infty} \frac{z^{n}}{d_{n}(\alpha)}$ \,converges to \,$0$ \,in \,$X$.

\vskip .15cm

In order to prove part (b), the use of the Cauchy estimates leads us to
\[
|\Lambda_{\alpha }^{n}f(0)|= \left|\frac{f^{(n)}(0)}{n!} \right|d_{n}(\alpha ) \leq \frac{d_{n}(\alpha)}{r^{n}} M_1(f,r).
\]
Assume, by way of contradiction, that there is $C > 0$ such that \,$M_1(f,r)\leq C \frac{e^r}{r^{\alpha + 1}}$ \,for all $r > 0$.
Hence we find that
$$
|\Lambda_{\alpha }^{n}f(0)| \leq C\frac{d_{n}(\alpha )}{r^{n+\alpha +1}} e^r \le C\frac{d_{n}(\alpha )}{(n+\alpha +1)^{n+\alpha +1}}
\, e^{n+\alpha +1} \hbox{ \ for all } n \geq 1.
$$
Now Lemma 1 implies that the sequence \,$\{\Lambda_{\alpha }^{n}f(0)\}_{n \ge 1}$ \,is bounded, so that \,$f$ \,cannot be hypercyclic for $\Lambda_{\alpha }$. This is the desired contradiction.
\end{proof}

\begin{remark}
An operator \,$T:H(\C ) \to H(\C )$ \,is called a {\it weighted backward shift} if there is a sequence $\{a_n\}_{n \ge 1} \subset \C \setminus \{0\}$
such that
$$
(T\, f)(z) = \sum_{n=0}^\infty a_{n+1} c_{n+1} z^n \quad (z \in \C )
$$
provided that \,$f(z) = \sum_{n=0}^\infty c_n z^n$. We denote this operator by \,$T = B_{(a_n)}$. It is not difficult to see that \,$B_{(a_n)}$ \,is well defined and continuous if and only if \,$\sup_{n \in \N}
|a_n|^{1/n} < \infty$. In \cite{Gro00a} (see also \cite{bernal1996}) is was proved that, under the assumption \,$\sup_{n \in \N}
|a_n|^{1/n} < \infty$, the operator \,$B_{(a_n)}$ \,is hypercyclic if and only if \,$\limsup_{n \to \infty} \Big| \prod_{k=1}^n a_k \Big|^{1/n} = \infty$. This result contains MacLane's theorem as a special case because the differentiation operator $D$ is the weighted backward shift \,$B_{(a_n)}$ \,with \,$a_n = n$ $(n \in \N )$. Now, observe that, for $\alpha > -1/2$, the Dunkl operator is \,$B_{(a_n)}$ \,with \,$a_n = {d_n(\alpha ) \over d_{n-1} (\alpha )}$. In this case, from Lemma 1 one easily derives that \,$\sup_{n \in \N} |a_n|^{1 \over n} < \infty$ \,and
\,$\limsup_{n \to \infty} \Big| \prod_{k=1}^n a_k \Big|^{1/n} = \limsup_{n \to \infty} d_n(\alpha )^{1 \over n} = C \, \lim_{n \to \infty} (n + \alpha + 1)^{n + \alpha + 1 \over n} = \infty$. Then the Dunkl operator is hypercyclic (note that we get an alternative proof of this result given in \cite{BeSiTri}; as a matter of fact, a much more general result is shown in \cite[Theorem 4.1]{BeSiTri}). Moreover, Grosse-Erdmann proved in \cite[Theorems 1--2]{Gro00a} that, under the assumption that $(|a_n|)$ is nondecreasing, the critical rate of growth of $M_\infty (f, \cdot )$ allowed for a $B_{(a_n)}$-hypercyclic entire function \,$f$ \,is \,$\mu (r) = \max_{n \ge 0} |r^n/\prod_{k=1}^n a_k|$. If $B_{(a_n)} = \Lambda_\alpha$ then $(|a_n|)$ is nondecreasing and $\mu (r) = \max_{n \ge 0} |r^n/d_n(\alpha )|$, so the cited theorems of \cite{Gro00a} are at our disposal. Nevertheless, we have opted for a more direct proof, which in turn yields critical rates for all $M_p(f, \cdot \,)$ $(1 \le p \le \infty )$.
\end{remark}

\section{Frequently hypercyclic entire functions for the Dunkl operator.} \label{Section-freqhc-Dunkl}

First of all, it is easy to extend the notion of frequent hypercyclicity to a sequence of mappings, see \cite{BoGro}.

\begin{definition}\label{D-fruniv}
Let $X$ and $Y$ be topological spaces, and let \,$T_n:X \to Y$ $(n \in \N )$ \,be a sequence of continuous linear mappings.
Then an element $x \in X$ is called {\it frequently universal} for
the sequence $(T_n)$ if, for every non-empty open subset $U$ of $Y$,
\[
\underline{\mbox{dens}}\{n\in \mathbb{N} : T_nx\in U\}>0.
\]
The sequence $(T_n)$ is called {\it frequently universal} if it
possesses a frequently universal element.
\end{definition}

Of course, an operator \,$T:X \to X$ \, on a topological vector space $X$ is frequently hypercyclic if
and only if the sequences of its iterates $(T^n)$ is frequently universal.
In \cite{BoGro}, a {\it Frequent Universality Criterion} was obtained
that generalizes the Frequent Hypercyclicity Criterion of Bayart
and Grivaux \cite{BaGr06}. We state it here (see Theorem \ref{T-FUC} below) only for Fr\'echet
spaces. Recall that a collection of series \,$\sum_{k=1}^\infty x_{k,j}$ $(j \in I)$ \,in a Fr\'echet space $X$ is said to be {\it unconditionally
convergent, uniformly in $j\in I$,} if for every continuous
seminorm \,$p$ \,on \,$X$ \,and every \,$\varepsilon > 0$ \,there is some
\,$N \geq 1$ such that for every finite set \,$F\subset \mathbb{N}$
with \,$F \cap \{1,2,\ldots,N\} = \varnothing$ \,and every \,$j \in I$ \,we have
that \,$p(\sum_{k\in F} x_{k,j})<\varepsilon$.

\begin{theorem} \label{T-FUC}   
Let $X$ be a Fr\'echet space, $Y$ a separable Fr\'echet space, and
\,$T_n : X\to Y$ $(n \in \mathbb{N})$ \,a sequence of operators. Suppose that
there are a dense subset $Y_0$ of \,\,$Y$ and mappings \,$S_n:Y_0\to X$
$(n \in \mathbb{N})$ such that, for all \,$y \in Y_0$,
\begin{itemize}
\item[\rm (i)] $\sum_{n=1}^k T_{k}S_{k-n}y$ converges
unconditionally in $Y$, uniformly in $k\in \mathbb{N}$, \item[\rm
(ii)] $\sum_{n=1}^\infty T_kS_{k+n}y$ converges unconditionally in
$Y$, uniformly in $k\in \mathbb{N}$, \item[\rm (iii)]
$\sum_{n=1}^\infty S_{n}y$ converges unconditionally in $X$,
\item[\rm (iv)] $T_nS_ny \to y$ \,as \,$n \to \infty$.
\end{itemize}
Then the sequence $(T_n)$ is frequently universal.
\end{theorem}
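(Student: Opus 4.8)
The plan is to carry out, for a sequence of mappings, the construction that underlies the Frequent Hypercyclicity Criterion of Bayart and Grivaux \cite{BaGr06}. Since $Y$ is a separable Fr\'echet space and $Y_0$ is dense in $Y$, one first fixes a countable set $\{y_j:j\in\N\}\subset Y_0$ that is still dense in $Y$. Next I would invoke the combinatorial lemma of Bayart--Grivaux, in the form used in \cite{BoGro}: there exist pairwise disjoint sets $A_j\subset\N$ $(j\in\N)$, each of positive lower density, with a strong separation property --- if $m\in A_j$, $n\in A_l$ and $m\ne n$, then $|m-n|\ge j+l$, and moreover the gaps between consecutive elements of each individual $A_j$ tend to $\infty$. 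The candidate frequently universal vector is
\[
x\ :=\ \sum_{j=1}^{\infty}\ \sum_{n\in A_j}S_ny_j .
\]

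Then I would check that the double series defining $x$ converges in $X$; this uses only hypothesis (iii): for each fixed $j$ the series $\sum_n S_ny_j$ converges unconditionally, and the separation bound $|m-n|\ge j+l$ together with the disjointness of the $A_j$ lets one dominate, for every continuous seminorm on $X$, the sum over $j$ by a convergent tail. Once $x$ is defined, fix $j$ and $k\in A_j$ and split the orbit as
\[
T_kx\ =\ T_kS_ky_j\ +\ \sum_{\substack{n\in A_j\\ n<k}}T_kS_ny_j\ +\ \sum_{\substack{n\in A_j\\ n>k}}T_kS_ny_j\ +\ \sum_{l\ne j}\ \sum_{n\in A_l}T_kS_ny_l .
\]
The first term tends to $y_j$ by (iv). In the second term each index obeys $k-n\ge 2j$ and, because the gaps of $A_j$ grow, for $k\in A_j$ large these differences exceed any prescribed threshold, so, writing the term as a sum of the form $\sum_m T_kS_{k-m}y_j$ over indices $m$ bounded below, the uniform-in-$k$ unconditional convergence in (i) makes it eventually smaller than any given $\varepsilon$; the third term is handled symmetrically using (ii). The mixed term over $l\ne j$ is controlled by combining (i) and (ii) with the cross separation $|k-n|\ge j+l$, summed over $l$. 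Consequently, for every neighbourhood $U$ of $y_j$ one has $T_kx\in U$ for all sufficiently large $k\in A_j$, hence $\underline{\mbox{dens}}\{k\in\N:T_kx\in U\}\ge\underline{\mbox{dens}}(A_j)>0$; since $\{y_j\}$ is dense, the same holds for every non-empty open $U\subset Y$, so $(T_n)$ is frequently universal.

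The step I expect to require the most care is the interaction, in the error estimates, between the threshold $N(\varepsilon)$ supplied by the unconditional convergence in (i)--(ii) (which grows as $\varepsilon\to0$) and the spacing built into the sets $A_j$: one must ensure that, for a fixed target $y_j$ and a fixed $\varepsilon$, all but finitely many $k\in A_j$ have their $A_j$-neighbours far enough away for the relevant partial sums to fall below the threshold --- this is precisely why growing gaps within $A_j$, rather than a fixed minimum spacing, are needed, and why the finitely many exceptional $k$ do not disturb the lower density. The bookkeeping in the mixed term (summing the (i)--(ii) tails over $l\ne j$) and the extraction of the countable dense sequence $\{y_j\}$ from $Y_0$, which is where separability of $Y$ enters, are the remaining, essentially routine, points.
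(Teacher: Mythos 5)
The paper itself gives no proof of Theorem~\ref{T-FUC}: it is quoted from \cite{BoGro}, where it is established by exactly the kind of Bayart--Grivaux construction you outline (disjoint sets of positive lower density, a candidate vector built as a double series over these sets, and error estimates from the unconditional convergence hypotheses). So your overall strategy matches the source, but as written there is a genuine flaw in the key combinatorial ingredient.

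The lemma you invoke does not exist in the form you state it: you ask for sets $A_j$ of positive lower density whose consecutive gaps tend to infinity, and these two requirements are incompatible, since a subset of $\N$ with gaps tending to infinity has density zero. The correct lemma (from \cite{BaGr06}, as used in \cite{BoGro}) only gives pairwise disjoint $A_j$ of positive lower density with $n\ge j$ for $n\in A_j$ and $|n-m|\ge j+l$ for $n\in A_j$, $m\in A_l$, $n\ne m$; inside a fixed $A_j$ this is the \emph{fixed} spacing $2j$, nothing more. Consequently your central estimate fails: for fixed $j$ and arbitrary $\varepsilon>0$ you cannot conclude that all but finitely many $k\in A_j$ satisfy $T_kx\in B(y_j,\varepsilon)$, because the $A_j$-neighbours of $k$ remain at distance exactly of order $2j$, so (i)--(iii) only bound the error by the tails of the relevant series beyond the threshold $j$ (or $2j$), a quantity depending on $j$ and $y_j$ that does not shrink as $k\to\infty$. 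The standard repair is the threshold/repetition device rather than growing gaps: enumerate a countable dense subset of $Y_0$ as $(y_j)$ with each element repeated infinitely often; use (i)--(iii) to fix for each $j$ an $N_j$ such that every finite partial sum (of the $T_kS_{k-n}y_j$, $T_kS_{k+n}y_j$ and $S_ny_j$ type) with indices beyond $N_j$ has seminorm less than $2^{-j}$; arrange, via the lemma and a re-indexing, that all elements of $A_j$ and all separations involving $A_j$ exceed $N_j$. One then proves the single estimate that $T_kx-y_j$ is small of order $2^{-j}$ (summed over $l$, using the bound $2^{-l}$ for the cross terms, which is also what makes your mixed sum over $l\ne j$ summable --- a bare ``for every $\varepsilon$ there is $N$'' does not suffice there) for \emph{all} $k\in A_j$, not just all but finitely many; density of the repeated sequence then gives, for any nonempty open $U$, some $j$ with $y_j\in U$ and error bound small enough, so $\underline{\mathrm{dens}}\{k:T_kx\in U\}\ge\underline{\mathrm{dens}}(A_j)>0$. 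The same threshold property (elements of $A_j$ being at least $N_j$), rather than the cross-separation you cite, is also what makes the double series defining $x$ converge.
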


We note that the sums in (i) can be understood as infinite series by adding zero terms.

\vskip .15cm

As an auxiliary result we shall need the following
estimate. In the following, we shall adopt the convention
\,${1 \over 2p} = 0$ \,for \,$p = \infty$.

\begin{lemma}\label{L-Bar}
Let \,$1 \le q \leq 2$ \,and \,$p$ \,be the conjugate exponent of \,$q$.
Then there is some \,$C > 0$ \,such that, for all \,$r>0$, we have
\[
\sum_{n=0}^{\infty} \frac{r^{qn}}{(d_n (\alpha ))^q} \le
C \left( \frac{e^r}{r^{\alpha + {1 \over 2} + {1 \over 2p}}} \right)^q.
\]
\end{lemma}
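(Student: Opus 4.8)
The plan is to use Stirling's formula (Lemma~\ref{Lemma-Stirling}) to replace $d_n(\alpha)$ by its asymptotic value, turning the series into a sum of values of an explicit unimodal function, then to bound that sum by an integral, and to estimate the integral by a Laplace-type argument. First I would record that, by Lemma~\ref{Lemma-Stirling}, there is $C>0$ with $(d_n(\alpha))^{-q}\le C\,e^{q(n+\alpha+1)}(n+\alpha+1)^{-q(n+\alpha+1)}$ for every $n\ge 0$, so that $r^{qn}/(d_n(\alpha))^q\le C\,\phi(n)^q$, where $\phi(t):=r^{t}\,e^{t+\alpha+1}\,(t+\alpha+1)^{-(t+\alpha+1)}$ for $t\ge 0$. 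A direct computation gives $(\log\phi)'(t)=\log r-\log(t+\alpha+1)$ and $(\log\phi)''(t)=-(t+\alpha+1)^{-1}<0$, so $\phi$ is log-concave and unimodal, with maximum $e^{r}r^{-(\alpha+1)}$ attained at $t_{\ast}=r-\alpha-1$ when $r\ge\alpha+1$ (and $\phi$ nonincreasing on $[0,\infty)$ otherwise). The standard integral comparison for unimodal sequences, applied on a unit interval on each side of the peak, then gives $\sum_{n\ge 0}\phi(n)^q\le C\bigl(\max_{t\ge 0}\phi(t)^q+\int_0^{\infty}\phi(t)^q\,dt\bigr)$.

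Next I would bound the two terms for $r\ge\max(1,\alpha+1)$. The maximum term equals $e^{qr}r^{-q(\alpha+1)}$ (or a bounded constant, in the remaining small range), and since $r^{-q(\alpha+1)}\le r^{-q(\alpha+1)+1/2}$ for $r\ge 1$ this is $\le C\,e^{qr}r^{-q(\alpha+1)+1/2}$. For the integral, substituting $u=t+\alpha+1$ and then $u=rx$ gives $\int_0^{\infty}\phi(t)^q\,dt=r^{-q(\alpha+1)}\int_{\alpha+1}^{\infty}(re/u)^{qu}\,du\le r^{-q(\alpha+1)}\,r\,e^{qr}\int_0^{\infty}e^{-qr\,h(x)}\,dx$, where $h(x):=1-x+x\log x$ is nonnegative and strictly convex with $h(1)=h'(1)=0$. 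Since $q\ge 1$ and $h\ge 0$ it suffices to bound $\int_0^{\infty}e^{-r\,h(x)}\,dx$, which I would split over $[1/2,2]$, where $h(x)\ge\tfrac14(x-1)^2$ yields a Gaussian contribution $\le C\,r^{-1/2}$; over $x\ge 2$, where the tangent bound $h(x)\ge h(2)+(\log 2)(x-2)$ with $h(2)>0$ yields a contribution $\le e^{-rh(2)}/(r\log 2)$; and over $0<x\le 1/2$, where $h(x)\ge h(1/2)>0$ yields a contribution $\le\tfrac12 e^{-rh(1/2)}$. Using $e^{-rc}\le C_{c}\,r^{-1/2}$ for $r\ge 1$ and $c>0$, this gives $\int_0^{\infty}e^{-rh(x)}\,dx\le C\,r^{-1/2}$, hence $\int_0^{\infty}\phi(t)^q\,dt\le C\,e^{qr}r^{-q(\alpha+1)+1/2}$.

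Putting the pieces together yields $\sum_{n\ge 0}r^{qn}/(d_n(\alpha))^q\le C\,e^{qr}r^{-q(\alpha+1)+1/2}$ for $r\ge\max(1,\alpha+1)$, and the elementary identity $q\bigl(\alpha+\tfrac12+\tfrac1{2p}\bigr)=q(\alpha+1)-\tfrac12$ (which uses $\tfrac1p+\tfrac1q=1$ together with the stated convention $\tfrac1{2p}=0$ when $q=1$) shows the right-hand side is exactly $C\,(e^{r}/r^{\alpha+1/2+1/(2p)})^q$. For $0<r\le\max(1,\alpha+1)$ I would argue directly: the left-hand side is dominated by the convergent series $\sum_{n\ge 0}(\max(1,\alpha+1))^{qn}/(d_n(\alpha))^q$ (finite since $d_n(\alpha)\to\infty$ super-exponentially by Lemma~\ref{Lemma-Stirling}), while $(e^{r}/r^{\alpha+1/2+1/(2p)})^q$ is continuous, strictly positive on that interval and blows up as $r\to 0^{+}$ (because $q(\alpha+1)-\tfrac12>0$), hence has a positive infimum there; enlarging $C$ absorbs this range.

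The one genuinely delicate point is extracting the precise gain $r^{1/2}$ uniformly in $r$ rather than only asymptotically, i.e.\ the Laplace estimate $\int_0^{\infty}e^{-rh(x)}\,dx\le C\,r^{-1/2}$: the work lies in choosing the splitting of $(0,\infty)$ so that the tail contributions decay exponentially in $r$ while the central contribution is genuinely of order $r^{-1/2}$ for every $r\ge 1$. Everything else is bookkeeping with Stirling's formula and the conjugate-exponent identity.
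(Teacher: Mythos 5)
Your proof is correct, but it follows a genuinely different route from the paper's. The paper exploits the explicit formula for $d_n(\alpha)$ to split the sum into even and odd indices, recognizes each part as a series of the Barnes type $E_{\alpha}(z;\theta,\beta)=\sum_{n\ge 0}z^n/\bigl((n+\theta)^{\beta}\Gamma(\alpha n+1)\bigr)$, and then invokes Barnes' classical asymptotic expansion $E_{\alpha}(r;\theta,\beta)=\alpha^{\beta-1}r^{-\beta/\alpha}e^{r^{1/\alpha}}\bigl(1+O(r^{-1/\alpha})\bigr)$ to produce the bound $C\bigl(e^{r}/r^{\alpha+\frac12+\frac{1}{2p}}\bigr)^{q}$; the square-root gain $r^{1/2}$ that distinguishes this lemma from the cruder bound $e^{r}/r^{\alpha+1}$ comes prepackaged in Barnes' formula. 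You instead work directly from Lemma \ref{Lemma-Stirling}: you dominate the terms by values of the log-concave function $\phi(t)=r^{t}e^{t+\alpha+1}(t+\alpha+1)^{-(t+\alpha+1)}$, compare the sum with $\max\phi^{q}+\int_{0}^{\infty}\phi^{q}$, and extract the $r^{-1/2}$ gain by a hands-on Laplace estimate for $\int_{0}^{\infty}e^{-rh(x)}\,dx$ with $h(x)=1-x+x\log x$, using the quadratic lower bound $h(x)\ge\frac14(x-1)^2$ near $x=1$ and exponentially small tails elsewhere; your exponent bookkeeping $q\bigl(\alpha+\frac12+\frac{1}{2p}\bigr)=q(\alpha+1)-\frac12$ and your separate treatment of $0<r\le\max(1,\alpha+1)$ are both correct. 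What your approach buys is self-containedness and explicit uniformity in $r>0$ (the paper's appeal to an asymptotic as $r\to+\infty$ leaves the small-$r$ range implicit), at the cost of carrying out the Laplace-method details yourself; the paper's approach is shorter on the page and records the pleasant connection between the Dunkl coefficients $d_n(\alpha)$ and Mittag-Leffler--Barnes functions, but it rests on an external citation.
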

\begin{proof}
Consider the entire functions
\[
E_{\alpha}(z;\theta,\beta)=
\sum_{n=0}^\infty\frac{z^n}{(n+\theta)^{\beta}\Gamma(\alpha n+1)}
\]
for $\alpha, \theta>0, \beta \in \mathbb{R}$. In the special case of
$\beta = 0$ these are the Mittag-Leffler functions, see for instance \cite[Section 18.1]{Er55}.
Barnes \cite[pp. 289--292]{Bar06} studied the functions \,$E_{\alpha}(z;\theta,\beta)$ and, by using
Stirling's formula, he derived asymptotical expansions from which one deduces that, for $0 < \alpha \le 2$,
\begin{equation}\label{eq1}
E_{\alpha}(r;\theta,\beta) = \alpha^{\beta-1}r^{-\beta/\alpha}
e^{r^{1/\alpha}}\big(1 +O(r^{-1/\alpha})\big) \hbox{ \ as \ } r \to +\infty .
\end{equation}

Now, by the definition of \,$d_n (\alpha )$, we get
$$
\sum_{n=0}^\infty \frac{r^{q n}}{(d_{n}(\alpha ))^q} \leq \sum_{n=0}^\infty \frac{(\Gamma(\alpha +1))^q r^{2qn}}{(2^{2n} \, n! \,
\Gamma (n + \alpha + 1))^q}
+ \sum_{n=0}^\infty \frac{(\Gamma(\alpha +1))^q r^{(2n + 1)q}}{(2^{2n+2} \, n! \, \Gamma (n + \alpha + 2))^q}
$$
$$
=(\Gamma (\alpha + 1))^q \Big(\sum_{n=0}^\infty \frac{r^{2qn}}{2^{2qn} \, n!^q \,(\Gamma(n + \alpha +1))^q}
+ \frac{r^{q}}{2^{2q}} \, \sum_{n=0}^\infty \frac{ r^{2nq}}{(2^{2qn} \, n!^q \, (\Gamma(n + \alpha +2))^q}\Big)
$$

Thanks to \eqref{eq1}, we obtain for \,$r > 0$ \,that
\begin{equation*}
\begin{split}
\sum_{n=0}^\infty \frac{ r^{2qn}}{2^{2qn} \, n!^q \, (\Gamma(n +\alpha + 1))^q} &\le C \, \sum_{n=0}^\infty \frac{((qr)^{2q})^{n}}{(n+1)^{q(\alpha +1)-1/2}\Gamma(2qn+1)} \\
&\le C \, \big((qr)^{2q}\big)^{-\frac{q(\alpha +1) - {1 \over 2}}{2q}} e^{\big((qr)^{2q}\big)^{1 \over 2q}} \\
&\le C \, \left( \frac{e^r}{r^{\alpha + {1 \over 2} + {1 \over 2p}}} \right)^q,
\end{split}
\end{equation*}
where the constant \,$C$ \,is not necessarily the same in each occurrence.

\vskip .15cm

Analogously, we have for all \,$r > 0$ \,that
$$
\frac{r^q}{2^{2q}} \sum_{n=0}^\infty \frac{ r^{2nq}}{(2^{2qn} \, n!^q \, (\Gamma (n + \alpha + 2))^q}
\le C \, r^q \, \left( \frac{e^r}{r^{\alpha + 1 + {1 \over 2} + {1 \over 2p}}} \right)^q
\le C \, \left( \frac{e^r}{r^{\alpha + {1 \over 2} + {1 \over 2p}}} \right)^q .
$$
Finally, it is enough to add up both inequalities to get the desired result.
\end{proof}

Our first main result in this section gives growth rates for which
$\Lambda _{\alpha}$-frequently hypercyclic functions exist.

\begin{theorem}\label{T-RateDFHC} Let $1\leq p\leq \infty$, and put
\,$a = \alpha + \frac{1}{2} + \frac{1}{2\max\{2,p\}}$. Then, for any function
\,$\varphi:\mathbb{R}_+ \to \mathbb{R}_+$ \,with \,$\varphi(r) \to \infty$
as \,$r\to\infty$, there is an entire function \,$f$ \,with
$$
M_p(f,r)\leq \varphi(r){\frac{e^r}{r^{a}}}\quad \mbox{for $r>0$
sufficiently large}
$$
that is frequently hypercyclic for the Dunkl operator.
\end{theorem}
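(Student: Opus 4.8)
The plan is to apply the Frequent Universality Criterion (Theorem~\ref{T-FUC}) with the same right inverse $S$, $S(z^{k})=\frac{d_k(\alpha)}{d_{k+1}(\alpha)}z^{k+1}$, that was used in the proof of Theorem~\ref{T-ratehc}, but this time inside a Banach space adapted to the exponent $a$. First, exactly as in Theorem~\ref{T-ratehc}(a), I would reduce to the case where $\varphi$ is continuous and non-decreasing on $[0,+\infty)$ with $\varphi(0)>0$; an entire function obeying the resulting bound also obeys the original one for $r$ large. Then set
\[
\widetilde X=\Big\{f\in H(\C):\ \|f\|:=\sup_{r>0}\frac{r^{a}\,M_p(f,r)}{\varphi(r)\,e^{r}}<\infty\Big\},
\]
which, just like the space appearing in the proof of Theorem~\ref{T-ratehc}, is a Banach space continuously embedded in $H(\C)$ (the embedding via the Cauchy estimate $M_\infty(f,r)\le\frac{r'}{r'-r}M_1(f,r')\le\frac{r'}{r'-r}M_p(f,r')$ for $0<r<r'$), and let $X$ be the closure of the polynomials in $\widetilde X$. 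Take $Y=H(\C)$, $Y_0$ the polynomials, $T_n=\Lambda_\alpha^n|_X$ and $S_n=S^{n}$; then $S^{n}z^{m}=\frac{d_m(\alpha)}{d_{m+n}(\alpha)}z^{m+n}$, and $\Lambda_\alpha^k S^{j}z^{m}$ is computed from the identities of Section~\ref{Section-preliminaries}.

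Conditions (i), (ii) and (iv) of Theorem~\ref{T-FUC} are essentially formal. Indeed $T_nS_nz^{m}=z^{m}$ identically, which is (iv); $\sum_{n\ge1}T_kS_{k+n}z^{m}=d_m(\alpha)\sum_{j>m}z^{j}/d_j(\alpha)$ does not depend on $k$ and converges unconditionally in $H(\C)$ because $d_j(\alpha)\to\infty$ very rapidly, which is (ii); and $\sum_{n=1}^{k}T_kS_{k-n}z^{m}$ is, for each $k$, a sub-sum of the fixed finite family $\big\{\frac{d_m(\alpha)}{d_{m-n}(\alpha)}z^{m-n}:1\le n\le m\big\}$, which is (i). Linearity extends all of this to arbitrary $y\in Y_0$.

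The substance is condition (iii): unconditional convergence of $\sum_{n\ge1}S_ny$ in $X$. Since $\sum_{n\ge1}S_nz^{m}=d_m(\alpha)\sum_{j>m}z^{j}/d_j(\alpha)$, it suffices to show that $\sum_{j\ge0}z^{j}/d_j(\alpha)$ converges unconditionally in $X$, and this is where $a$ enters. Put $s=\max\{2,p\}$ and let $q\in[1,2]$ be the conjugate exponent of $s$, so that $a=\alpha+\tfrac12+\tfrac1{2s}$ is precisely the exponent occurring in Lemma~\ref{L-Bar} for this $q$. Combining $M_p\le M_s$ with Parseval's identity (if $s=2$), with the dual form of the Hausdorff--Young inequality (which follows from Theorem~\ref{Thm-HausdorffYoung} by $L^{s}$-duality, if $2<s<\infty$), or with the triangle inequality (if $s=\infty$), one obtains for every finite $F\subset\{N+1,N+2,\dots\}$
\[
M_p\Big(\sum_{j\in F}\frac{z^{j}}{d_j(\alpha)},r\Big)\le\Big(\sum_{j\in F}\frac{r^{qj}}{(d_j(\alpha))^{q}}\Big)^{1/q}\le\Big(\sum_{j>N}\frac{r^{qj}}{(d_j(\alpha))^{q}}\Big)^{1/q}.
\]
By Lemma~\ref{L-Bar} the full series $\sum_{j\ge0}r^{qj}/(d_j(\alpha))^{q}$ is at most $C(e^{r}/r^{a})^{q}$, whence $\sup_{r\ge R}\frac{r^{a}}{\varphi(r)e^{r}}\big(\sum_{j>N}r^{qj}/(d_j(\alpha))^{q}\big)^{1/q}\le C^{1/q}/\varphi(R)\to0$ as $R\to\infty$ (using that $\varphi$ is non-decreasing), while for each fixed $R$ the tail $\sum_{j>N}r^{qj}/(d_j(\alpha))^{q}$ tends to $0$ uniformly on $[0,R]$ as $N\to\infty$, since $\sum_j u^{j}/(d_j(\alpha))^{q}$ has infinite radius of convergence. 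Together these give $\|\sum_{j\in F}z^{j}/d_j(\alpha)\|\to0$ as $\min F\to\infty$, which is (iii).

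Theorem~\ref{T-FUC} then provides a frequently universal vector $f$ for $(T_n)=(\Lambda_\alpha^n|_X)$; such an $f$ is frequently hypercyclic for $\Lambda_\alpha$ and, lying in $X$, satisfies $M_p(f,r)\le\|f\|\,\varphi(r)e^{r}/r^{a}$ for all $r>0$, and replacing $f$ by $f/\|f\|$ (a nonzero scalar multiple of a frequently universal vector is again frequently universal) removes the constant. The hardest step will be (iii): one must pass from the integral mean $M_p$ of a power series to an $\ell^{q}$-norm of its Taylor coefficients by means of $M_p\le M_{\max\{2,p\}}$ together with the dual Hausdorff--Young inequality, and then bound the resulting series by $C(e^{r}/r^{a})^{q}$ through the Barnes/Mittag--Leffler asymptotics packaged in Lemma~\ref{L-Bar}; everything else is routine bookkeeping.
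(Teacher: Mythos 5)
Your proposal is correct and follows essentially the same route as the paper: the Frequent Universality Criterion (Theorem~\ref{T-FUC}) with the same right inverse $S$, a weighted Banach space of entire functions built from $M_p$, and the key step (iii) handled via the (dual) Hausdorff--Young inequality together with Lemma~\ref{L-Bar}, splitting the supremum at a radius $R$ where $\varphi$ is large. The only differences are cosmetic: you treat all $p$ at once through $s=\max\{2,p\}$ where the paper first reduces $1\le p<2$ to $p=2$ via $M_p\le M_2$, and you are slightly more explicit about using the dual form of Theorem~\ref{Thm-HausdorffYoung} and about removing the constant by passing to $f/\|f\|$.
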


\begin{proof} Since
\[
M_p(f,r)\leq M_{2}(f,r)\quad \mbox{for $1\leq p< 2$,}
\]
we need only prove the result for \,$p \geq 2$.

\vskip .15cm

Thus let $2 \leq p \leq \infty$. We shall make use of the Frequent
Universality Criterion (Theorem \ref{T-FUC}). Assuming without loss of generality that
$\inf_{r>0} \varphi(r)>0$, we consider the vector space
\[
X := \Big\{ f \in H(\mathbb{C}) : \, \sup_{r>0} \frac{M_p(f,r) \,
r^{\alpha +\frac{1}{2}+ {1 \over 2p}}}{\varphi (r) \, e^r} < \infty \Big\}.
\]
If we endow \,$X$ \,with the norm \,$\|f\| :=\sup_{r>0} \frac{M_p(f,r)
r^{\alpha +\frac{1}{2}+ {1 \over 2p}}}{\varphi (r) \, e^r}$ \,then it is not difficult to see that \,$(X,\|\cdot\|)$ \,is a Banach
space that is continuously embedded in \,$H(\mathbb{C})$.

\vskip .15cm

Let $Y_0 \subset H(\mathbb{C})$ be  the set of
polynomials, and we consider the mappings
\[
T_n = \Lambda _{\alpha}^n|_X : X \to H(\mathbb{C}),
\]
which are continuous, and
\[
S_n:Y_0\to X, \,\,S_n=S^n \; \; \mbox{ with }
S(z^{k}) = \frac{d_{k}(\alpha)}{d_{k+1}(\alpha)}z^{k+1}.
\]
Then we have for any polynomial $f$ and any $k \in \Bbb N$ that
\[
\sum_{n=1}^k T_{k}S_{k-n}f = \sum_{n=1}^k \Lambda _{\alpha}^n f,
\]
that converges unconditionally convergent in $H(\Bbb C)$,
uniformly for $k\in \Bbb N$, because \,\,$\sum_{n=1}^\infty \Lambda _{\alpha}^n f$
\,\,is a finite series. Moreover, we have according to the pro\-per\-ties given in Section \ref{Section-preliminaries} that
\[
T_n S_nf = f \quad \mbox{for any $n \in \mathbb{N}$},
\]
and
\[
\sum_{n=1}^\infty T_k S_{k+n}f = \sum_{n=1}^\infty S_n f.
\]
Thus the conditions (i)--(iv) in Theorem \ref{T-FUC} are
satisfied if we can show that \,$\sum_{n=1}^\infty S_n f$ \,converges
unconditionally in \,$X$, for any polynomial
\,$f(z) =z ^k$ $(k \in \mathbb{N}_0)$. In which case
\[
\sum_{n=1}^\infty S_n f(z) = \sum_{n=1}^\infty
\frac{d_{k}(\alpha)}{d_{k+n}(\alpha)} \, z^{k+n}.
\]

Therefore, it is sufficient to show that
\[
\sum_{n=1}^\infty \frac{z^{n}}{d_{n}(\alpha)}
\]
converges unconditionally in \,$X$. To this end, let \,$\varepsilon >0$ \,and
\,$N \in \mathbb{N}$. By Theorem \ref{Thm-HausdorffYoung} we obtain for any finite set \,$F \subset
\mathbb{N}$ \,that
\[
M_p\Big(\sum_{n\in F} \frac{z^n}{d_{n}(\alpha)},r\Big) \le \Big( \sum_{n\in
F}\frac{r^{qn}}{(d_{n}(\alpha))^{q}}\Big)^{1/q},
\]
where \,$q$ \,is the conjugate exponent of \,$p$. Hence, if
\,$F \cap \{0,1,\ldots,N\} = \varnothing$, then
\[
\Big\|\sum_{n\in F} \frac{z^n}{d_{n}(\alpha)}\Big\|  \le \Big(\sup_{r>0}
\frac{r^{q((\alpha +\frac{1}{2})+ {1 \over 2p})}}{\varphi(r)^{q}e^{qr}}\sum_{n
> N}\frac{r^{qn}}{(d_{n}(\alpha))^{q}}\Big)^{1/q}.
\]

We choose \,$R > 0$ \,such that \,$\varphi(r)^q\geq 1/\varepsilon$ \,for
\,$r\geq R$. Then we have that
\[
\sup_{r\leq R} \frac{r^{q((\alpha +\frac{1}{2}) + {1 \over 2p})}}{\varphi(r)^{q} \, e^{qr}} \, \sum_{n
> N}\frac{r^{qn}}{(d_{n}(\alpha))^{q}} \leq\frac{{R^{q((\alpha +\frac{1}{2}) + {1 \over 2p})}}}{\inf_{r > 0} \varphi(r)^{q}}
\sum_{n>N}\frac{R^{qn}}{(d_{n}(\alpha))^{q}} \longrightarrow 0
\]
as $N \to \infty$. Moreover, Lemma \ref{L-Bar} implies that
\[
\sup_{r\geq
R}\frac{1}{\varphi(r)^{q}}\frac{r^{q((\alpha +\frac{1}{2})+ {1 \over 2p})}}{e^{qr}}\sum_{n
> N}\frac{r^{qn}}{(d_{n}(\alpha))^{q}}\leq C\varepsilon \quad\mbox{for any $N \in \mathbb{N}$,}
\]
where \,$C$ \,is a constant only depending on \,$q$.

\vskip .15cm

This shows that
\[
\Big\|\sum_{n\in F}  \frac{z^{n}}{d_{n}(\alpha)}\Big\|^q \leq
(1+C)\, \varepsilon,
\]
if $\min F> N$ and $N$ is sufficiently large, so that
$\sum_{n=1}^\infty  \frac{z^{n}}{d_{n}(\alpha)}$ converges unconditionally in
\,$X$, as required.
\end{proof}

The following result, that concludes the paper, gives lower estimates on the possible growth rates.

\begin{theorem}\label{T-RateDFHC2}
Let \,$1\leq p\leq \infty$, and put \,$a = \alpha +\frac{1}{2} + \frac{1}{2\min\{2,p\}}$.
Assume that \,$\psi:\mathbb{R}_+\to\mathbb{R}_+$ \,is a function with
\,$\psi(r)\to 0$ \,as \,$r \to\infty$. Then there is no \,$\Lambda _{\alpha}$-frequently
hypercyclic entire function $f$ that satisfies
\begin{equation}\label{eq2}
M_p(f,r)\leq \psi(r){\frac{e^r}{r^{a}}}\quad \mbox{for $r>0$ sufficiently large}.
\end{equation}
\end{theorem}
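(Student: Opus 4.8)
Here is how I would attack Theorem~\ref{T-RateDFHC2}. The plan is to adapt to $\Lambda_\alpha$ the lower‑bound technique used for $D$ in \cite{BlaBoGro} and \cite{BoBo13}, resting on three ingredients: the Hausdorff--Young inequality (Theorem~\ref{Thm-HausdorffYoung}), the sharp asymptotics $d_n(\alpha)\sim (n+\alpha+1)^{n+\alpha+1}/e^{n+\alpha+1}$ of Lemma~\ref{Lemma-Stirling}, and an elementary pigeonhole argument to pass from ``all large $n$'' to ``a set of $n$ of positive lower density''. First I would reduce to $1\le p\le 2$: since $M_2(f,r)\le M_p(f,r)$ for $p\ge 2$ and $a=\alpha+\tfrac34$ in that range, the case $p\ge 2$ is contained in the case $p=2$. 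So assume $1\le p\le 2$ and let $q\in[2,\infty]$ be the conjugate exponent; then $a=\alpha+1-\tfrac{1}{2q}$ (with $\tfrac{1}{2q}=0$ for $q=\infty$), and, crucially, $\bigl(e^r/r^{\alpha+1}\bigr)^q\,r^{1/2}=\bigl(e^r/r^{a}\bigr)^q$.

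Suppose, by way of contradiction, that $f(z)=\sum_{n\ge0}c_nz^n$ with $c_n=f^{(n)}(0)/n!$ is $\Lambda_\alpha$-frequently hypercyclic and satisfies \eqref{eq2}. Applying the definition of frequent hypercyclicity to the neighbourhood $U=\{g\in H(\C):|g(0)-2|<1\}$ of the constant function $2$, the set $A:=\{n\in\N:|\Lambda_\alpha^nf(0)|>1\}$ has positive lower density $d>0$. Since $\Lambda_\alpha^nf(0)=c_n\,d_n(\alpha)$ (Section~\ref{Section-preliminaries}), this gives $|c_n|\ge 1/d_n(\alpha)$ for every $n\in A$. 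Because the Fourier coefficients of $t\mapsto f(re^{it})$ are $(c_nr^n)_{n\ge0}$, Theorem~\ref{Thm-HausdorffYoung} together with \eqref{eq2} yields, for $1<p\le2$ and all large $r$,
\[
\sum_{n\in A}\frac{r^{qn}}{d_n(\alpha)^q}\;\le\;\sum_{n\ge0}|c_n|^qr^{qn}\;\le\;M_p(f,r)^q\;\le\;\psi(r)^q\Bigl(\frac{e^r}{r^{a}}\Bigr)^q ,
\]
while for $p=1$ the Cauchy estimates give $\sup_{n\in A}r^n/d_n(\alpha)\le M_1(f,r)\le\psi(r)\,e^r/r^{\alpha+1}$. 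It therefore suffices to produce arbitrarily large $r$ with $\sum_{n\in A}r^{qn}/d_n(\alpha)^q\ge c\,(e^r/r^a)^q$ for a constant $c=c(d,q,\alpha)>0$, which is incompatible with $\psi(r)\to0$.

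For this key estimate, fix $\beta\in(0,d)$; then for large $M$ one has $\#\bigl(A\cap[\beta M,M]\bigr)\ge\tfrac12(d-\beta)M$, and covering $[\beta M,M]$ by $O(\sqrt M)$ consecutive blocks of length $\lfloor\sqrt M\rfloor$ produces, by pigeonhole, one such block $W$ with $\#(A\cap W)\ge c_1\sqrt M$; moreover $W$ lies in an interval of length $\le 1+\sqrt M$ about a point comparable to $M$. Put $r:=$ (midpoint of $W$)$+\alpha+1$, so $r\asymp M\to\infty$ and $|n+\alpha+1-r|\le\sqrt r$ for all $n\in W$. Writing $m=n+\alpha+1$ and bounding $d_n(\alpha)$ from above via Lemma~\ref{Lemma-Stirling}, for $n\in W$ one gets
\[
\frac{r^n}{d_n(\alpha)}\;\ge\;c_2\,\frac{e^r}{r^{\alpha+1}}\,e^{g_n(r)},\qquad g_n(r)=m\log\frac rm+m-r ,
\]
and the elementary inequality $\log(1+x)\ge x-\tfrac{x^2}2$ gives $g_n(r)\ge-\tfrac{(r-m)^2}{2m}\ge-C'$ uniformly over $n\in W$, hence $r^n/d_n(\alpha)\ge c_3\,e^r/r^{\alpha+1}$ there. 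Consequently
\[
\sum_{n\in A}\frac{r^{qn}}{d_n(\alpha)^q}\;\ge\;\sum_{n\in A\cap W}\Bigl(\frac{r^n}{d_n(\alpha)}\Bigr)^q\;\ge\;c_3^{\,q}\,\#(A\cap W)\Bigl(\frac{e^r}{r^{\alpha+1}}\Bigr)^q\;\ge\;c_3^{\,q}c_1\sqrt M\Bigl(\frac{e^r}{r^{\alpha+1}}\Bigr)^q ,
\]
and since $\sqrt M\asymp\sqrt r$ and $\bigl(e^r/r^{\alpha+1}\bigr)^q\sqrt r=\bigl(e^r/r^a\bigr)^q$, this is $\ge c\,(e^r/r^a)^q$, as needed. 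For $p=1$ this is even simpler: given $n^*\in A$, take $r=n^*+\alpha+1$, so that $r^{n^*}/d_{n^*}(\alpha)\sim e^r/r^{\alpha+1}=e^r/r^a$. Letting $M\to\infty$ (resp.\ $n^*\to\infty$) forces $\psi(r)^q\ge c$ along a sequence $r\to\infty$, the desired contradiction.

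The step I expect to be the crux is the uniform lower bound $r^n/d_n(\alpha)\ge c_3\,e^r/r^{\alpha+1}$ over the \emph{entire} block $W$, of length $\asymp\sqrt r$, straddling the ``peak'' $n\approx r-\alpha-1$: it is precisely the second–order control of $g_n(r)$ on that block which supplies the extra factor $r^{1/2}$ and thereby singles out the critical exponent $a$ rather than $\alpha+1$. The accompanying density/pigeonhole step is elementary but genuinely needed, since frequent hypercyclicity only controls a set of times of positive lower density rather than all large $n$; a single term near the peak suffices only in the borderline case $p=1$.
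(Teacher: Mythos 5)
Your proposal is correct, but it implements the key estimate differently from the paper. The paper also starts from Theorem \ref{Thm-HausdorffYoung} and the Stirling asymptotics of Lemma \ref{Lemma-Stirling}, but then, for each fixed $n$, it exploits the width \emph{in $r$} of the bump $r^{qn+\frac{q}{2p}+(\alpha+\frac12)q}e^{-qr}/d_n(\alpha)^q$: it integrates the Hausdorff--Young inequality over $r\in[m,3m]$, minorizes each bump by a linear function between its maximum $a_n$ (of height $\asymp 1/\sqrt n$) and its inflection point $b_n=a_n+\sqrt{a_n/q}$, and concludes that $\frac1m\sum_{n\le m}\big(|f^{(n)}(0)|\,d_n(\alpha)/n!\big)^q\to0$, so that for \emph{any} $f$ obeying \eqref{eq2} the set $\{n:|\Lambda_\alpha^n f(0)|>1\}$ has lower density zero. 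You instead work at a single radius: positive lower density plus a pigeonhole give a block $W$ of length $\asymp\sqrt M$ containing $\asymp\sqrt M$ elements of $A$, and at the chosen $r$ each term with $n\in A\cap W$ is comparable to $e^r/r^{\alpha+1}$, the count $\asymp\sqrt r$ supplying exactly the factor $r^{1/2}$ that converts $(e^r/r^{\alpha+1})^q$ into $(e^r/r^{a})^q$; this is the same quantitative heart (the bump $r^n/d_n(\alpha)$ stays within a constant of its peak for $\asymp\sqrt r$ values of the parameter), measured in $n$ at fixed $r$ rather than in $r$ at fixed $n$. What each buys: the paper's integration yields the stronger Ces\`aro/zero-density statement and handles $p=1$ by simply invoking Theorem \ref{T-ratehc}(b), while your route is a more direct contradiction needing only one radius per scale and avoiding the inflection-point computation (your $p=1$ treatment via Cauchy estimates is an equivalent rederivation). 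One small repair: $\log(1+x)\ge x-\frac{x^2}{2}$ holds only for $x\ge0$, so for $n\in W$ with $n+\alpha+1>r$ you should either use $\log(1+x)\ge x-x^2$ (valid for $|x|\le\frac12$) or take $r$ at the right end of the block so that $r\ge n+\alpha+1$ throughout $W$; either way the uniform bound $g_n(r)\ge-C'$ survives and nothing else in your argument changes.
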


\begin{proof} First, for \,$p=1$ \,the result follows immediately from Theorem \ref{T-ratehc}(b)
(notice that one may even take \,$\psi(r)\equiv C$ \,here). Moreover, since
\[
M_2(f,r)\leq M_p(f,r)\quad \mbox{for $2< p\leq\infty$,}
\]
it suffices to prove the result for \,$p\leq 2$.

\vskip .15cm

Thus let \,$1 < p\leq 2$ and $q$ is the conjugate exponent of \,$p$. We obviously may assume that \,$\psi$ \,is decreasing. Suppose that \,$f$ \,satisfy \eqref{eq2}. With
the help of Theorem \ref{Thm-HausdorffYoung} we get that
\begin{equation*}
\Big( \sum_{n=0}^\infty \Big(\frac{|f^{(n)}(0)|}{n!}r^{n}\Big)^q
\Big)^{1/q} \leq M_p(f,r)\leq \psi(r)\frac{e^r}{r^{\alpha +\frac{1}{2} + {1 \over 2p}}}
\end{equation*}
for \,$r > 0$ \,sufficiently large. Thus we have that, for large \,$r$,
\begin{equation}\label{eq3}
\sum_{n=0}^\infty \Big( \frac{|f^{(n)}(0)|}{n!}d_{n}(\alpha )\Big)^q
\cdot \frac{r^{qn+ {q \over 2p} + (\alpha +\frac{1}{2})q}e^{-qr}}{\psi(r)^q (d_{n}(\alpha ))^q} \leq 1.
\end{equation}

Using Stirling's formula we see that the function
\[
g(r) := \frac{r^{qn + {q \over 2p} +(\alpha +\frac{1}{2})q}e^{-qr}}{(d_{n}(\alpha))^q}
\]
has its maximum at \,$a_n := n + {1 \over 2p} + \alpha +\frac{1}{2}$ \,of order \,$1/\sqrt{n}$ \,and an
inflection point at \,$b_n := a_n + \sqrt{\frac{a_{n}}{q}}$. On \,$I_n:=[a_n,b_n]$, $g$ \,therefore
dominates the linear function \,$h$ \,that satisfies \,$h(a_n) = g(a_n), \,h(b_n)=0$.

\vskip .15cm

Now let \,$m \in \N$. If \,$m$ \,is sufficiently large and \,$m < n \leq 2m$
\,then \,$I_n \subset [m,3m]$. Hence we have for these \,$n$ \,that
\begin{equation*}
\begin{split}
\int_m^{3m}\frac{r^{q \, n + {q \over 2p} + (\alpha +\frac{1}{2})q} \, e^{-qr}}{\psi(r)^q(d_{n}(\alpha))^q} \, dr &\geq
\int_{I_n} \frac{h(r)}{\psi(r)^q} \, dr \\
&\geq C \frac{1}{\psi(m)^q}\frac{1}{\sqrt{n}}\sqrt{\frac n q + \frac
{{1 \over 2p}+ \alpha +\frac{1}{2}}{q}} \\
&\geq C\frac{1}{\psi(m)^q}.
\end{split}
\end{equation*}
Now, integrating \eqref{eq3} over \,$[m,3m]$ \,we obtain that for \,$m$ \,sufficiently large
\[
\frac{1}{m}\sum_{n=m+1}^{2m} \Big( \frac{|f^{(n)}(0)|}{n!}d_{n}(\alpha)\Big)^q \leq C \, \psi(m)^q.
\]
Hence
\[
\frac{1}{m}\sum_{n=0}^{m} \Big( \frac{|f^{(n)}(0)|}{n!}d_{n}(\alpha)\Big)^q \longrightarrow 0 \hbox{ \ as \ } m \to \infty .
\]
Therefore we have
\begin{align*}
\underline{\mbox{dens}}\{ n\in \mathbb{N} : \, |\Lambda_{\alpha}^{n}f(0)|>1\} &= \liminf_{m\to\infty} \frac {1}{ m} \# \{n\leq m :  |\Lambda_{\alpha}^n f(0)|>1\}\\
&\leq  \liminf_{m\to\infty} \frac{1}{m}\sum_{n=0}^{m}
\Big( \frac{|f^{(n)}(0)|}{n!}d_{n}(\alpha)\Big)^q = 0.
\end{align*}
If we take \,$U := (\delta_0)^{-1} (\{z \in \C : \, |z| > 1\})$ (where \,$\delta_0$ \,represents the $0$-evaluation functional \,$g \in H(\C ) \mapsto g(0) \in \C$, that is continuous) and \,$T := \Lambda_\alpha$, then \,$U$ \,is a nonempty open subset of \,$H(\C )$ \,and the last display shows that
\,$\underline{\mbox{dens}} \{ n \in \mathbb{N} : \, T_n f \in U\} = 0$, which prevents \,$f$ \,to be
frequently hypercyclic for the Dunkl operator. This proves the theorem.
\end{proof}

\vskip .20cm

\noindent {\bf Acknowledgements.} The first author is partially supported by the Plan Andaluz de Investigación de la Junta de Andalucía FQM-127 Grant P08-FQM-03543 and by MEC Grant MTM2012-34847-C02-01. The second author is partially supported by MEC and FEDER, project no.~MTM2014-52376-P.

{\small

}

\medskip

{\scriptsize
$$
\begin{array}{lr}
\mbox{Luis Bernal-Gonz\'alez } & \mbox{ Antonio Bonilla } \\
\mbox{Departamento de An\'alisis Matem\'atico } & \mbox{ Departamento de An\'alisis Matem\'atico } \\
\mbox{Universidad de Sevilla } & \mbox{ Universidad de La Laguna } \\
\mbox{Facultad de Matem\'aticas, Apdo.~1160 } &  \mbox{ C/Astrof\'{\i}sico Francisco S\'anchez, s/n } \\
\mbox{Avda.~Reina Mercedes, 41080 Sevilla, Spain} & \mbox{ 38271 La Laguna, Tenerife, Spain } \\  \mbox{E-mail: {\tt lbernal@us.es} } & \mbox{ E-mail: {\tt abonilla@ull.es} }
\end{array}
$$}

\end{document}